\theoremstyle{plain}
\newtheorem{thm}{Theorem}
  \theoremstyle{remark}
  \newtheorem{rem}[thm]{Remark}
  \theoremstyle{plain}
  \newtheorem{prop}[thm]{Proposition}
  \theoremstyle{plain}
  \newtheorem{lem}[thm]{Lemma}
\newcommand{\lyxaddress}[1]{
\par {\raggedright #1
\vspace{1.4em}
\noindent\par}
}
\begin{document}

\title{Asymptotic Analysis of a Non-Linear Non-Local Integro-Differential
Equation Arising from\\Bosonic Quantum Field Dynamics}

\author{S\'ebastien Breteaux%
\thanks{Institut f\"ur Analysis und Algebra, Technische Universit\"at Braunschweig,
Rebenring 31, A 14, 38106 Braunschweig, Germany. \emph{E-mail adress}:
sebastien.breteaux@ens-cachan.org.%
}{ }%
\thanks{This work was partially done in the IRMAR laboratory of the University
of Rennes 1, Campus de Beaulieu, bat. 22 \& 23, 263 avenue du G\'en\'eral
Leclerc, CS 74205, 35042 Rennes C\'edex, France.%
}}
\maketitle
\begin{abstract}
We introduce a one parameter family of non-linear, non-local integro-differential
equations and its limit equation. These equations originate from a
derivation of the linear Boltzmann equation using the framework of
bosonic quantum field theory. We show the existence and uniqueness
of strong global solutions for these equations, and a result of uniform
convergence on every compact interval of the solutions of the one
parameter family towards the solution of the limit equation.
\end{abstract}
\phantom{qsdf}

\emph{Keywords}: Integro-differential equation, nonlinear equation,
nonlocal equation, equation with memory.

\emph{Mathematics Subject Classification (2010)}: 34G20, 45J05, 47G20.

\section{Introduction}

Let $d\geq1$ be an integer, $h$ a strictly positive parameter, $f$
a function in the Schwartz space $\mathcal{S}(\mathbb{R}^{d};\mathbb{C})$
and $\vec{\xi}_{0}$ a vector in $\mathbb{R}^{d}\setminus\{0\}$.

The spaces of bounded and continuous functions from $X\subset\mathbb{R}$
to $Y\subset\mathbb{R}^{d}$ are denoted by $\mathcal{B}(X;Y)$ and
$\mathcal{C}(X;Y)$. For a function $\vec{u}\in\mathcal{B}(X;Y)$
we write $\|\vec{u}\|_{\infty,X^{\prime}}=\sup\{|\vec{u}(x)|\,,\, x\in X^{\prime}\}$
for $X^{\prime}\subset X$.

The applications $\mathcal{F}^{(h)}$ and $F^{(0)}$ from $\mathcal{B}(\mathbb{R}^{+};\mathbb{R}^{d})$
to $\mathcal{C}(\mathbb{R}^{+};\mathbb{R}^{d})$ are defined for $\vec{u}\in\mathcal{B}(\mathbb{R}^{+};\mathbb{R}^{d})$
by\begin{align*}
\mathcal{F}^{(h)}(\vec{u})(t) & :=-2\Re\int_{\mathbb{R}^{d}}\int_{0}^{t/h}e^{-ir\left(\vec{\eta}^{.2}-2\vec{\eta}.\fint_{t-hr}^{t}\vec{u}_{\sigma}\dif\sigma\right)}\vec{\eta}|f(\vec{\eta})|^{2}\dif r\dif\vec{\eta}\\
F^{(0)}(\vec{u})(t) & :=-2\Re\int_{0}^{\infty}\int_{\mathbb{R}^{d}}e^{-ir\left(\vec{\eta}^{.2}-2\vec{\eta}.\vec{u}_{t}\right)}\vec{\eta}|f(\vec{\eta})|^{2}\dif\vec{\eta}\dif r\end{align*}
where $\fint_{a}^{b}\vec{u}(\sigma)\dif\sigma:=\frac{1}{b-a}\int_{a}^{b}\vec{u}(\sigma)\dif\sigma$.
\begin{thm}
For every $h>0$, the non-linear integro-differential equation\begin{align}
\left\{ \begin{aligned}\frac{\dif}{\dif t}\vec{\xi}_{t}^{\,(h)} & =\mathcal{F}^{(h)}(\vec{\xi}_{t}^{\,(h)})\\
\vec{\xi}_{t=0}^{\,(h)} & =\vec{\xi}_{0}\,,\end{aligned}
\right.\label{eq:equation-de-depart}\end{align}
admits a unique solution $\vec{\xi}^{\,(h)}$ in $\mathcal{C}^{1}(\mathbb{R}_{t}^{+};\mathbb{R}^{d})$.

The limit equation as $h\to0$\begin{align}
\left\{ \begin{aligned}\frac{\dif}{\dif t}\vec{\xi}_{t}^{\,(0)} & =F^{(0)}(\vec{\xi}_{t}^{\,(0)})\\
\vec{\xi}_{t=0}^{\,(0)} & =\vec{\xi}_{0}\,.\end{aligned}
\right.\label{eq:equation-limite1}\end{align}
admits a unique maximal solution $\vec{\xi}^{\,(0)}$ in $\mathcal{C}^{1}([0,T_{max});\mathbb{R}^{d})$
with $T_{max}>0$.
\begin{itemize}
\item The norm of $\vec{\xi}^{\,(0)}$ decreases with time.
\item If $\min\{|f(\vec{\eta})|,\,\vec{\eta}\in\bar{B}(0,2|\vec{\xi}_{0}^{\,(0)}|)\}$
is strictly positive then $T_{max}=+\infty$, and $\vec{\xi}_{t}^{\,(0)}\to0$
as $t\to+\infty$.
\end{itemize}
For any $T\in(0,T_{max})$, $\vec{\xi}^{\,(h)}$ converges uniformly
to $\vec{\xi}^{\,(0)}$ on $[0,T]$ as $h\to0$.\end{thm}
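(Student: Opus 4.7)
\emph{Step 1 (oscillatory estimate, the main obstacle).} Completing the square $\vec{\eta}^{.2}-2\vec{\eta}.\vec{v}=(\vec{\eta}-\vec{v})^{.2}-|\vec{v}|^2$ and rescaling $\vec{\eta}=\vec{v}+\vec{\zeta}/\sqrt{r}$, the inner integral $I(r,\vec{v}):=\int_{\mathbb{R}^d}e^{-ir(\vec{\eta}^{.2}-2\vec{\eta}.\vec{v})}\vec{\eta}|f(\vec{\eta})|^2\dif\vec{\eta}$ satisfies $|I(r,\vec{v})|\lesssim\langle r\rangle^{-d/2}$ with constants uniform for $\vec{v}$ in bounded sets; one integration by parts in $\vec{\eta}$ also delivers rapid decay in $r$ and control of $\partial_{\vec{v}}I$. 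Consequently $F^{(0)}$ is a bounded, smooth, locally Lipschitz vector field on $\mathbb{R}^d$, and $\mathcal{F}^{(h)}(\vec{u})(t)$ is bounded and Lipschitz in $\vec{u}|_{[0,t]}$ with respect to the sup norm, uniformly in $h>0$. This is the main technical step: the estimates must be uniform in $\vec{v}$ (and in $h$) to feed the Picard and Gronwall arguments that follow.

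\emph{Step 2 (existence, uniqueness, norm decrease, asymptotics).} Equation (\ref{eq:equation-limite1}) is an ODE with locally Lipschitz right-hand side, so Cauchy--Lipschitz yields the unique maximal $\mathcal{C}^1$ solution $\vec{\xi}^{\,(0)}$. For (\ref{eq:equation-de-depart}) a Picard iteration in $\mathcal{C}([0,T];\mathbb{R}^d)$ gives local existence, and the bound $\|\mathcal{F}^{(h)}(\vec{u})\|_\infty\leq C(f)$ from Step 1 rules out blow-up, giving the global solution. For monotonicity, differentiate to get $\tfrac{d}{dt}|\vec{\xi}_t^{\,(0)}|^2=2\vec{\xi}_t^{\,(0)}.F^{(0)}(\vec{\xi}_t^{\,(0)})$, then perform the $r$-integral via $\int_0^R$, $R\to\infty$ (Schwartz decay in $\vec{\eta}$ justifying the exchanges) to reach formally $-4\pi\int_{\mathbb{R}^d}\delta(\vec{\eta}^{.2}-2\vec{\eta}.\vec{\xi}_t^{\,(0)})(\vec{\eta}.\vec{\xi}_t^{\,(0)})|f(\vec{\eta})|^2\dif\vec{\eta}$; its support is the sphere $|\vec{\eta}-\vec{\xi}_t^{\,(0)}|=|\vec{\xi}_t^{\,(0)}|$, on which $\vec{\eta}.\vec{\xi}_t^{\,(0)}=|\vec{\eta}|^2/2\geq 0$, hence the monotonicity. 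The bound $|\vec{\xi}_t^{\,(0)}|\leq|\vec{\xi}_0|$ also rules out blow-up for (\ref{eq:equation-limite1}), so $T_{max}=\infty$. When $|f|\geq c>0$ on $\bar B(0,2|\vec{\xi}_0|)$, the sphere sits inside that ball; a direct surface-integral computation then yields $-\tfrac{d}{dt}|\vec{\xi}_t^{\,(0)}|^2\geq g(|\vec{\xi}_t^{\,(0)}|)$ with $g>0$ on $(0,|\vec{\xi}_0|]$, and an ODE comparison forces $\vec{\xi}_t^{\,(0)}\to 0$.

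\emph{Step 3 (uniform convergence as $h\to 0$).} Fix $T\in(0,T_{max})$ and decompose $\vec{\xi}_t^{\,(h)}-\vec{\xi}_t^{\,(0)}=\int_0^t[\mathcal{F}^{(h)}(\vec{\xi}^{\,(h)})(s)-\mathcal{F}^{(h)}(\vec{\xi}^{\,(0)})(s)]\dif s+\int_0^t[\mathcal{F}^{(h)}(\vec{\xi}^{\,(0)})(s)-F^{(0)}(\vec{\xi}_s^{\,(0)})]\dif s$. The Lipschitz estimate of Step 1 controls the first integrand by $L\|\vec{\xi}^{\,(h)}-\vec{\xi}^{\,(0)}\|_{\infty,[0,s]}$. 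For the second, cut the $r$-integral at a threshold $R$: on $r\leq R$, uniform continuity of $\vec{\xi}^{\,(0)}$ on $[0,T]$ gives $\fint_{s-hr}^s\vec{\xi}_\sigma^{\,(0)}\dif\sigma\to\vec{\xi}_s^{\,(0)}$ uniformly in $s\in[0,T]$ as $h\to 0$, while the tail $r>R$ is bounded uniformly in $h$ by the oscillatory decay of Step 1 and can be made arbitrarily small by taking $R$ large. Gronwall on $[0,T]$ then delivers $\|\vec{\xi}^{\,(h)}-\vec{\xi}^{\,(0)}\|_{\infty,[0,T]}\to 0$.
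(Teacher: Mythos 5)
Your Steps 1 and 3 contain the gap that matters. The Gronwall argument in Step 3 needs the claim from Step 1 that $\mathcal{F}^{(h)}(\vec{u})(t)$ is Lipschitz in $\vec{u}$ for the sup norm \emph{uniformly in $h$}, in order to absorb the term $\mathcal{F}^{(h)}(\vec{\xi}^{\,(h)})-\mathcal{F}^{(h)}(\vec{\xi}^{\,(0)})$. But the only $h$-independent control available comes from the oscillatory decay: after Parseval the difference of the inner $\vec\eta$-integrals is bounded by $(\pi/r)^{d/2}\min\{2r\|\widehat{g}{}'\|_{L^{1}}\|\vec u-\vec v\|_{\infty},2\|\hat g\|_{L^{1}}\}$, and integrating in $r$ yields only a H\"older-$\tfrac12$ modulus in $d=3$ (and $\varepsilon\log(1/\varepsilon)$ in $d=4$); with a right-hand side of order $\sqrt{G}$ the comparison argument gives $G(t)\lesssim t^{2}$, not something vanishing with $h$, so the convergence proof collapses precisely in the low dimensions of interest. (The non-oscillatory bound $|e^{i\phi}-e^{i\psi}|\le|\phi-\psi|$ does give Lipschitz continuity, but with constant of order $(t/h)^{2}$, useless as $h\to0$.) The subsidiary claim that one integration by parts in $\vec\eta$ gives rapid decay in $r$ and control of $\partial_{\vec v}I$ is also incorrect: the phase $r(\vec\eta^{.2}-2\vec\eta.\vec v)$ is stationary at $\vec\eta=\vec v$, so the decay is exactly $r^{-d/2}$, and each $\partial_{\vec v}$ costs a factor $r$, so this route does not prove $F^{(0)}$ smooth or Lipschitz for $d=3,4$. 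The paper sidesteps both problems: it computes $F^{(0)}$ explicitly as a surface integral, $F^{(0)}(\vec u)=-\pi|\vec u|^{-1}\int_{\vec u+|\vec u|S^{d-1}}g\,\dif\mathcal H^{d-1}$, proves it Lipschitz on compact subsets of $\mathbb R^{d}\setminus\{0\}$, and in the comparison it never linearizes $\mathcal F^{(h)}$ in $\vec u$: it shows $\vec\xi^{\,(h)\prime}=F^{(0)}(\vec\xi^{\,(h)})+o(1)$ — using the uniform derivative bound $\|\vec\xi^{\,(h)\prime}\|_{\infty}\le2C_{g}^{1}$ to control $\fint_{t-hr}^{t}\vec\xi^{\,(h)}-\vec\xi_{t}^{\,(h)}$, plus the tail estimate $\int_{t/h}^{\infty}r^{-d/2}\dif r$ — and then applies Gronwall with the Lipschitz constant of the \emph{local} field $F^{(0)}$ on a compact annulus, together with a bootstrap keeping $\vec\xi^{\,(h)}$ in a tubular neighbourhood of the limit trajectory. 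Your consistency term along $\vec\xi^{\,(0)}$ is fine; the difference term on $\mathcal F^{(h)}$ is where the argument breaks, and the fix is to adopt the paper's decomposition (or prove the missing uniform Lipschitz property, which I do not believe holds in $d=3$).

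Two secondary points. First, global existence for Equation~(\ref{eq:equation-de-depart}) is asserted for every $d\ge1$, and the paper proves it by a contraction whose constant is of order $\delta(T+\delta)/h^{2}$ followed by the explicit verification that the admissible step lengths sum to $+\infty$; your route through boundedness of $\mathcal F^{(h)}$ is only available for $d\ge3$ (convergence of $\int_{1}^{\infty}r^{-d/2}\dif r$), and even granting it, for an equation with memory ``bounded right-hand side rules out blow-up'' is not by itself a continuation argument — one must still check that the local existence steps, whose size shrinks as $T$ grows, exhaust $\mathbb R^{+}$. Second, your unconditional $T_{max}=+\infty$ from the norm bound presupposes that $F^{(0)}$ is Lipschitz up to the origin, which your Step 1 does not establish; the paper only guarantees an annulus-confinement (hence global existence with a quantitative decay) under the extra hypothesis $\min\{|f|\}>0$ on $\bar B(0,2|\vec\xi_{0}|)$, via the two-sided differential inequality $-C_{M,f}|\vec\xi_{t}^{\,(0)}|^{d}\le\frac{\dif}{\dif t}|\vec\xi_{t}^{\,(0)}|^{2}\le-C_{m,f}|\vec\xi_{t}^{\,(0)}|^{d}$.
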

\begin{rem}
The non-locality feature of Equation~(\ref{eq:equation-de-depart})
disapears in the limit Equation~(\ref{eq:equation-limite1}).
\end{rem}

\paragraph{Origin of the equations}

These equations come from a problem of derivation of the linear Boltzmann
equation in dimension~$d$ from a model with a particle in a Gaussian
random field (centered and invariant by translation), in the weak
density limit, as presented in~\cite{arXiv:1107.0788}. The parameter
$h$ has then the interpretation of the ratio of the microscopic typical
length over the macroscopic one, and it is thus natural to be interested
in the behaviour of the equations as $h\to0$. Note that the same
scaling is used to rescal the time. It is \emph{not} the Planck constant
(here taken equal to one by a suitable choice of units). It turns
out that this stochastic problem can be translated in a deterministic
one with more geometric features by using an isomorphism between the
$L^{2}(\Omega)$ space associated with the Gaussian random field and
the symmetric Fock space $\Gamma L^{2}=\bigoplus_{n=0}^{\infty}(L^{2})^{\otimes_{s}n}$
over $L^{2}:=L^{2}(\mathbb{R}^{d};\mathbb{C})$ (see~\cite{MR0489552}
for information about this isomorphism). Up to some isomorphisms the
Schrödinger equation can be rewritten as\[
ih\,\partial_{t}u=Q(z)^{Wick}\, u,\]
where
\begin{itemize}
\item The function $u$ has values in~$\Gamma L^{2}$.
\item The polynomial~$Q$ in the variable $z\in L^{2}$ is defined by\[
Q(z)=\xi^{2}+\langle z,(\vec{\eta}^{.2}-2\vec{\xi}.\vec{\eta})z\rangle+\langle z,\vec{\eta}z\rangle^{.2}+2\sqrt{h}\Re\langle z,f\rangle\,,\]
for a fixed $\vec{\xi}$ in $\mathbb{R}^{d}\setminus\{0\}$, with
$\vec{a}^{.2}=\sum_{j=1}^{d}a_{j}^{2}$ for every vector~$\vec{a}$,
$\langle\cdot,\cdot\rangle$ is the scalar product in~$L^{2}$, $\Re$
denotes the real part. $\vec{\eta}$ denotes the mutliplication operator
which to a function $v$ of the variable $\vec{\eta}\in\mathbb{R}^{d}$
associates the function with $d$ components $\vec{\eta}\mapsto\vec{\eta}v(\eta)$.
We do here as if this operator was bounded on $L^{2}$ since the small
improvements needed to handle this case are irrelevant for this article.
\item The Wick quantization is defined for a monomial $b(z)=\langle z^{\otimes q},\tilde{b}z^{\otimes p}\rangle$
with variable in $z\in L^{2}$ and $\tilde{b}\in\mathcal{L}((L^{2})^{\otimes_{s}p},(L^{2})^{\otimes_{s}q})$
by the fomula\[
b^{Wick}\big|_{L^{2}(\mathbb{R}^{d})^{\otimes_{s}p+n}}=\frac{\sqrt{(n+p)!(n+q)!}}{n!}\;\tilde{b}\otimes_{s}Id_{L^{2}(\mathbb{R}^{d})^{\otimes_{s}n}}\,.\]

\end{itemize}
In this framework the Hamilton equations associated with the polynomial
$Q(z)$ (see~\cite{MR2465733,MR2513969,MR2802894}) are $ih\partial_{t}z_{t}^{(h)}=\partial_{\bar{z}}Q(z_{t}^{(h)})$
with $\partial_{\bar{z}}Q(z)=(\vec{\eta}^{.2}-2\vec{\xi}.\vec{\eta})z+2\langle z,\vec{\eta}z\rangle.\vec{\eta}z+\sqrt{h}f$
which can again be written as\[
ih\partial_{t}z_{t}^{(h)}=(\vec{\eta}^{.2}-2\vec{\xi}_{t}.\vec{\eta})z_{t}^{(h)}+\sqrt{h}f\]
once we defined $\vec{\xi}_{t}^{\,(h)}=\vec{\xi}-\langle z_{t}^{(h)},\vec{\eta}z_{t}^{(h)}\rangle$.
We are interested in the solution of this equation with initial data
$z_{t=0}^{h}=0$ which is\[
z_{t}^{h}=-\frac{i}{h}\int_{0}^{t}e^{-\frac{i}{h}\int_{s}^{t}(\vec{\eta}^{.2}-2\vec{\xi}_{t}^{\,(h)}.\vec{\eta})d\sigma}\sqrt{h}f\, ds\,.\]
This solution is fully determined by $\vec{\xi}_{t}^{\,(h)}$ which
satisfies Equation~\ref{eq:equation-de-depart}. The study of equation
of $\vec{\xi}_{t}^{\,(h)}$ is thus justified by the study of the
hamiltonian equation associated with the polynom $Q(z)$ which is
itsel useful to give an approximate solution to the Schrödinger equation
above in terms of coherent states (see also the Hepp method, introduced
in~\cite{MR0332046} and presented in~\cite{MR2465733} whose viewpoint
we use in this article).

\paragraph{Intrisic significance of the equations}

Eventhough this approach did not until now allow us to improve our
results on the derivation of the linear Boltzmann equation, these
equations are interesting in themselve since it is a case of non-linear,
non-local equations and thus an already non trival problem for which
a thorough study is possible.

\paragraph{Organisation of the paper}

We show in Section~\ref{sec:parameter-dependent-equation} the existence
and uniqueness of a global solution to the parameter dependent Equation~(\ref{eq:equation-de-depart}),
along with some estimates about the solutions. Section~\ref{sec:limit-equation}
is devoted to the Equation~(\ref{eq:equation-limite1}). We first
compute a more geometric form of the application $F^{(0)}$ and then
use it to prove the existence and uniqueness of the solution to Equation~(\ref{eq:equation-limite1})
through the usual theory of differential equations. Then we compare
both solutions in Section~\ref{sec:Comparison} using a Grönwall
type argument.

\paragraph*{Notation}
\begin{enumerate}
\item For any time $T>0$ the applications $\mathcal{F}^{(h)}$ and $F^{(0)}$
induce applications from $\mathcal{B}([0,T];\mathbb{R}^{d})$ to $\mathcal{C}([0,T];\mathbb{R}^{d})$.
\item We sometime write $F^{(0)}(\vec{u})$ with $\vec{u}$ a fixed vector
in $\mathbb{R}^{d}\setminus\{0\}$ for \[
-2\Re\int_{0}^{\infty}\int_{\mathbb{R}^{d}}e^{-ir(\vec{\eta}^{.2}-2\vec{\eta}.\vec{u})}\vec{\eta}|f(\vec{\eta})|^{2}\dif\vec{\eta}\dif r\,.\]

\item We introduce the function $g:\vec{\eta}\in\mathbb{R}^{d}\mapsto\vec{\eta}|f(\vec{\eta})|^{2}$
to simplify the notations.
\end{enumerate}

\section{\label{sec:parameter-dependent-equation}Parameter-Dependent Equation}
\begin{prop}
In dimension $d\geq1$, for any $h>0$ Equation~(\ref{eq:equation-de-depart})
has a unique solution in $\mathcal{C}^{1}(\mathbb{R}^{+};\mathbb{R}^{d})$.\end{prop}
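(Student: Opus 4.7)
\medskip

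\noindent\emph{Proof plan.} My plan is to recast Equation~(\ref{eq:equation-de-depart}) as the fixed-point problem $\vec{u}=\Phi(\vec{u})$ on $\mathcal{C}([0,T];\mathbb{R}^{d})$ for arbitrary $T>0$, where
\[
\Phi(\vec{u})(t):=\vec{\xi}_{0}+\int_{0}^{t}\mathcal{F}^{(h)}(\vec{u})(s)\,\dif s,
\]
and to apply an iterated Banach fixed-point argument. Since the paper has already noted that $\mathcal{F}^{(h)}$ maps $\mathcal{B}([0,T];\mathbb{R}^{d})$ into $\mathcal{C}([0,T];\mathbb{R}^{d})$, any continuous fixed point of $\Phi$ is automatically in $\mathcal{C}^{1}([0,T];\mathbb{R}^{d})$ and solves Equation~(\ref{eq:equation-de-depart}). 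Arbitrariness of $T>0$ together with uniqueness then yields the global $\mathcal{C}^{1}$ solution.

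The two quantitative ingredients I would establish are a uniform pointwise bound and a locally Lipschitz estimate. Because $|e^{i\cdot}|=1$ and $g:=\vec{\eta}|f|^{2}\in L^{1}(\mathbb{R}^{d})$ (as $f\in\mathcal{S}$),
\[
|\mathcal{F}^{(h)}(\vec{u})(t)|\leq 2\int_{0}^{t/h}\!\!\int_{\mathbb{R}^{d}}|\vec{\eta}||f(\vec{\eta})|^{2}\,\dif\vec{\eta}\,\dif r=\frac{2t}{h}\|g\|_{L^{1}},
\]
uniformly in $\vec{u}$. For the Lipschitz bound I would apply $|e^{ix}-e^{iy}|\leq|x-y|$ to the real phase $2r\vec{\eta}\cdot\bar{u}_{t,r}$, where $\bar{u}_{t,r}:=\fint_{t-hr}^{t}\vec{u}_{\sigma}\,\dif\sigma$, together with $|\bar{u}_{t,r}-\bar{v}_{t,r}|\leq\|\vec{u}-\vec{v}\|_{\infty,[t-hr,t]}$, to obtain
\[
|\mathcal{F}^{(h)}(\vec{u})(t)-\mathcal{F}^{(h)}(\vec{v})(t)|\leq K(t)\,\|\vec{u}-\vec{v}\|_{\infty,[0,t]},\quad K(t):=2\frac{t^{2}}{h^{2}}\int_{\mathbb{R}^{d}}|\vec{\eta}|^{2}|f(\vec{\eta})|^{2}\,\dif\vec{\eta}.
\]

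The main obstacle is the non-locality: the right-hand side involves $\|\vec{u}-\vec{v}\|_{\infty,[0,t]}$ and not just $|\vec{u}(t)-\vec{v}(t)|$, so a weighted-exponential norm trick does not immediately produce a contraction, and the Lipschitz constant $K(t)$ is not uniform in $t$. However, the estimate has a Volterra structure that can be iterated: with $M(T):=\int_{0}^{T}K(s)\,\dif s$, a short induction using the monotonicity of $t\mapsto\|\vec{u}-\vec{v}\|_{\infty,[0,t]}$ produces
\[
\|\Phi^{n}(\vec{u})-\Phi^{n}(\vec{v})\|_{\infty,[0,T]}\leq\frac{M(T)^{n}}{n!}\|\vec{u}-\vec{v}\|_{\infty,[0,T]}.
\]
For $n$ large enough the ratio $M(T)^{n}/n!$ is smaller than $1$, so $\Phi^{n}$ is a contraction on $\mathcal{C}([0,T];\mathbb{R}^{d})$ and $\Phi$ has a unique fixed point $\vec{\xi}^{\,(h)}$ there. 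Uniqueness on $\mathbb{R}^{+}$ follows by applying the same estimate to the difference of any two $\mathcal{C}^{1}$ solutions and invoking the classical Gronwall-type argument for Volterra inequalities; concatenating the local solutions over an exhaustion $T_{n}\uparrow+\infty$ yields the desired unique global $\mathcal{C}^{1}$ solution.
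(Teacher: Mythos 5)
Your proof is correct, but it takes a genuinely different route from the paper. The paper does not run a Picard iteration on all of $[0,T]$: it extends the solution step by step, setting up a map $\Phi$ on $\mathcal{C}^{1}([T,T+\delta];\mathbb{R}^{d})$ with a modified function $\vec{\tilde{u}}$ that coincides with the already-constructed solution on $[0,T]$, so that in the Lipschitz estimate the averaged difference $\fint_{t-hr}^{t}(\vec{\tilde{u}}_{\sigma}-\vec{\tilde{v}}_{\sigma})\dif\sigma$ gains the small factor $(t-T)/(hr)$; this makes $\Phi$ an honest contraction (in $\mathcal{C}^{1}$ norm) once $\delta(T+\delta)\leq h^{2}/(8C_{g})$, and global existence then needs the additional elementary lemma that the admissible step sizes $t_{n+1}-t_{n}$ sum to $+\infty$. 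You instead exploit the causal (Volterra) structure directly --- $\mathcal{F}^{(h)}(\vec{u})(t)$ depends only on $\vec{u}|_{[0,t]}$, since for $0\leq r\leq t/h$ the averaging interval $[t-hr,t]$ lies in $[0,t]$ --- together with the crude Lipschitz bound $K(t)=2t^{2}h^{-2}\int|\vec{\eta}|^{2}|f|^{2}\dif\vec{\eta}$, and obtain the factorial estimate $\|\Phi^{n}(\vec{u})-\Phi^{n}(\vec{v})\|_{\infty,[0,T]}\leq\frac{M(T)^{n}}{n!}\|\vec{u}-\vec{v}\|_{\infty,[0,T]}$, so that some iterate of $\Phi$ is a contraction on $\mathcal{C}([0,T];\mathbb{R}^{d})$ for every fixed $T$. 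Your route buys global existence and uniqueness on each $[0,T]$ in one stroke, with no step-size bookkeeping and no divergence lemma, the growth of $K(t)\sim t^{2}/h^{2}$ being absorbed by the $n!$; the paper's local argument, in exchange, extracts the extra smallness in $(t-T)$ from the nonlocal average, which is a structural observation your argument does not need. One presentational point: the induction must be carried with the time-localized form $\|\Phi^{n}(\vec{u})-\Phi^{n}(\vec{v})\|_{\infty,[0,t]}\leq\frac{M(t)^{n}}{n!}\|\vec{u}-\vec{v}\|_{\infty,[0,t]}$ for all $t\leq T$, using $M'(s)=K(s)$ and $\int_{0}^{t}K(s)M(s)^{n}\dif s=M(t)^{n+1}/(n+1)$, not merely the statement at $t=T$; as your monotonicity remark indicates, this is the standard argument and it goes through, after which uniqueness of the fixed point of $\Phi^{n}$ gives uniqueness for $\Phi$, restrictions of the solutions on $[0,T']$ to $[0,T]$ ($T<T'$) agree by causality, and the global $\mathcal{C}^{1}$ solution follows.
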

\begin{proof}
Suppose we know that Equation~(\ref{eq:equation-de-depart}) has
a unique solution $\vec{\xi}^{\,(h)}$ on an interval $[0,T]$ and
want to extend this solution to a larger interval $[0,T+\delta]$.
We thus consider the application~$\Phi$ from $\mathcal{C}^{1}([T,T+\delta];\mathbb{R}^{d})$
to itself defined by\[
\Phi(\vec{u})(t)=\vec{\xi}^{\,(h)}(T)+\int_{T}^{t}\mathcal{F}^{(h)}(\vec{\tilde{u}})(s)\dif s\,,\]
where the function $\vec{\tilde{u}}$ is defined on $[0,T+\delta]$
and coincide with $\vec{\xi}^{(h)}$ on $[0,T]$ and $\vec{\tilde{u}}=\vec{u}$
on $[T,T+\delta]$. We want to prove that for $\delta$ small enough
$\Phi$ is a contraction.

We then obtain\begin{align*}
|\Phi(\vec{u})^{\prime}(t)-\Phi(\vec{v})^{\prime}(t)| & =|\mathcal{F}^{(h)}(\vec{u})(t)-\mathcal{F}^{(h)}(\vec{v})(t)|\\
 & \leq2\negmedspace\int_{\mathbb{R}^{d}}\negmedspace\int_{0}^{t/h}\negmedspace\left|e^{ir2\vec{\eta}\fint_{t-hr}^{t}\vec{\tilde{u}}_{\sigma}\dif\sigma}-e^{ir2\eta\fint_{t-hr}^{t}\vec{\tilde{v}}_{\sigma}\dif\sigma}\right|\negmedspace|g(\vec{\eta})|\!\dif r\!\dif\vec{\eta}\\
 & \leq4\int_{\mathbb{R}^{d}}\int_{0}^{t/h}\big|r\vec{\eta}.\fint_{t-hr}^{t}(\vec{\tilde{u}}_{\sigma}-\vec{\tilde{v}}_{\sigma})\dif\sigma\big|\dif r|g(\vec{\eta})|\dif\vec{\eta}\\
 & \leq4C_{g}\int_{0}^{t/h}r\frac{t-T}{hr}\dif r\|\vec{u}-\vec{v}\|_{\infty}\\
 & \leq4\frac{t}{h^{2}}(t-T)C_{g}\|\vec{u}-\vec{v}\|_{\infty}\\
 & \leq4\frac{T+\delta}{h^{2}}(t-T)C_{g}\|\vec{u}-\vec{v}\|_{\infty}\end{align*}
with $C_{g}=\int_{\mathbb{R}^{d}}|\vec{\eta}||g(\vec{\eta})|\dif\vec{\eta}$.
Integrating $\Phi(\vec{u})^{\prime}(t)-\Phi(\vec{u})^{\prime}(t)$
in time and taking the modulus we also get the estimate\[
\|\Phi(\vec{u})-\Phi(\vec{v})\|_{\infty}\leq2\frac{T+\delta}{h^{2}}\delta^{2}C_{g}\|\vec{u}-\vec{v}\|_{\infty}\,.\]
We can control the norm of $\Phi(\vec{u})-\Phi(\vec{v})$ by using
$\|\vec{u}-\vec{v}\|_{\infty}\leq\|\vec{u}-\vec{v}\|_{\mathcal{C}^{1}}$
\[
\|\Phi(\vec{u})-\Phi(\vec{v})\|_{\mathcal{C}^{1}}\leq(4\delta+2\delta^{2})\frac{T+\delta}{h^{2}}C_{g}\|\vec{u}-\vec{v}\|_{\mathcal{C}^{1}}\,,\]
for $\delta(T+\delta)\leq\frac{h^{2}}{8C_{g}}$ we get a contraction,
and we can apply a classical fixed point theorem.

Thus to prove a global existence result it is enough to observe that,
for any positive constant $C$, the sequence $(t_{n})_{n}$ such that
$t_{0}=0$ and $(t_{n+1}-t_{n})^{2}+t_{n}(t_{n+1}-t_{n})=C$ with
positive increments is such that \[
t_{n+1}-t_{n}=\frac{\sqrt{t_{n}^{2}+4C}-t_{n}}{2}=\frac{2C}{\sqrt{t_{n}^{2}+4C}+t_{n}}\geq\frac{C}{t_{n}+\sqrt{C}}\,.\]
using $\sqrt{a+b}\leq\sqrt{a}+\sqrt{b}$ for $a$ and $b$ non-negative.
A sum over $n$ gives $t_{N+1}\geq\sum_{n=0}^{N}\frac{C}{t_{n}+\sqrt{C}}$
and this shows that $(t_{n})$ is necessarily unbounded. We thus obtain
the existence and the uniqueness of a solution on $\mathbb{R}^{+}$.\end{proof}
\begin{prop}
In dimension $d\geq3$, the family $(\vec{\xi}^{\,(h)\prime})_{h>0}$
is uniformly bounded, \emph{i.e.} \[
\|\vec{\xi}^{\,(h)\prime}\|_{\infty,[0,+\infty)}\leq2C_{g}^{1}\,,\]
with $C_{g}^{1}=\|g\|_{L^{1}}+(\frac{d}{2}-1)\pi^{d/2}\|\hat{g}\|_{L^{1}}$.
(The Fourier transform is defined by $\hat{u}(x)=\frac{1}{\sqrt{2\pi}^{d}}\int_{\mathbb{R}^{d}}e^{-ix.\eta}u(\vec{\eta})\dif\vec{\eta}$.)\end{prop}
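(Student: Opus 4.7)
Since $\vec\xi^{\,(h)\prime}(t) = \mathcal{F}^{(h)}(\vec\xi^{\,(h)})(t)$, the task reduces to a uniform (in $t,h$) bound on $|\mathcal{F}^{(h)}(\vec u)(t)|$ for an arbitrary $\vec u \in \mathcal{B}(\mathbb{R}^+;\mathbb{R}^d)$. Fubini (applicable since $g(\vec\eta):=\vec\eta|f(\vec\eta)|^{2}$ is integrable and $r$ ranges over an interval of finite length) rewrites
\[
\mathcal{F}^{(h)}(\vec u)(t) = -2\Re \int_0^{t/h} I(r)\,\dif r,\qquad I(r):=\int_{\mathbb{R}^d} e^{-ir(\vec\eta^{.2}-2\vec\eta\cdot\vec V_r)}\,g(\vec\eta)\,\dif\vec\eta,
\]
with $\vec V_r:=\fint_{t-hr}^{t}\vec u_{\sigma}\,\dif\sigma$. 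The plan is to estimate $|I(r)|$ in two complementary regimes and then split the $r$-integral.

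For small $r$ the trivial bound $|I(r)|\leq \|g\|_{L^{1}}$ is retained. For large $r$ the key is to exploit the oscillation: after completing the square $\vec\eta^{.2}-2\vec\eta\cdot\vec V_r = |\vec\eta-\vec V_r|^{2}-|\vec V_r|^{2}$, substituting the inverse Fourier representation of $g$, and evaluating the resulting Fresnel integral
\[
\int_{\mathbb{R}^d} e^{-ir|\vec w|^{2}+i\vec w\cdot x}\,\dif\vec w = (\pi/r)^{d/2}\,e^{-i\pi d/4}\,e^{i|x|^{2}/(4r)},
\]
one obtains $|I(r)|\leq (2r)^{-d/2}\,\|\hat g\|_{L^{1}}$. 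Both $\|g\|_{L^{1}}$ and $\|\hat g\|_{L^{1}}$ are finite by the Schwartz regularity of $f$.

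Combining the two estimates via a cutoff $r_{0}$ produces
\[
\int_0^{t/h} |I(r)|\,\dif r \leq r_{0}\,\|g\|_{L^{1}} + \|\hat g\|_{L^{1}}\int_{r_{0}}^{+\infty}(2r)^{-d/2}\,\dif r.
\]
The dimensional assumption $d\geq 3$ enters exactly here: the tail $\int_{r_{0}}^{\infty} r^{-d/2}\,\dif r$ is finite, uniformly in the $t/h$-cutoff, whence the desired uniform bound on $\vec\xi^{\,(h)\prime}$; an appropriate choice of $r_{0}$ reproduces the stated constant $C_{g}^{1}$, the prefactor $-2\Re$ accounting for the factor $2$ in front. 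The main obstacle is the large-$r$ regime, where neither the $L^{1}$-modulus bound nor naive integration by parts (the memory term $\vec V_r$ depends on $r$) gives decay: the Fresnel identity is what converts oscillation in $r$ into quantitative $r^{-d/2}$ decay, and this decay is integrable only when $d\geq 3$ -- which is precisely the dimensional hypothesis of the proposition.
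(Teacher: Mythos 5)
Your argument is essentially the paper's own proof: the paper likewise splits the $r$-integral at a finite cutoff (it takes $r_{0}=1$), keeps the trivial $\|g\|_{L^{1}}$ bound on the short range, and for large $r$ translates $\vec\eta\mapsto\vec\eta+\fint_{t-hr}^{t}\vec u_{\sigma}\dif\sigma$ (your completion of the square) and bounds the oscillatory integral by Parseval against the Fresnel kernel, getting a $(\mathrm{const}/r)^{d/2}$ decay whose tail is integrable precisely for $d\geq3$. Your version is correct (the interchange behind the Fresnel step is justified since $g$ is Schwartz, and with $r_{0}=1$ your sharper constant indeed stays below $2C_{g}^{1}$), so there is nothing substantive to add.
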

\begin{proof}
It is enough to show that $\mathcal{F}^{\left(h\right)}$ is bounded.
Indeed, for $d\geq3$, using Parseval equality we get\begin{align*}
|\mathcal{F}^{(h)}(u)(t)| & \leq2\int_{0}^{t/h}\Big|\int_{\mathbb{R}^{d}}e^{-ir(\vec{\eta}^{.2}-2\vec{\eta}\fint_{t-hr}^{t}\vec{u}_{\sigma}\dif\sigma)}g(\vec{\eta})\dif\vec{\eta}\Big|\dif r\\
 & \leq2\left\Vert g\right\Vert _{L^{1}}+2\int_{1}^{t/h}\Big|\int_{\mathbb{R}^{d}}e^{-ir\vec{\eta}^{.2}}g\Big(\vec{\eta}+\fint_{t-hr}^{t}\vec{u}_{\sigma}\dif\sigma\Big)\dif\vec{\eta}\Big|\dif r\\
 & \leq2\left\Vert g\right\Vert _{L^{1}}+2\int_{1}^{t/h}\Big(\frac{\pi}{r}\Big)^{d/2}\left\Vert \hat{g}\right\Vert _{L^{1}}\dif r\\
 & \leq2\left\Vert g\right\Vert _{L^{1}}+(d-2)\pi^{d/2}\left\Vert \hat{g}\right\Vert _{L^{1}}\,.\tag*{\qedhere}\end{align*}

\end{proof}

\section{\label{sec:limit-equation}Limit Equation}
\begin{prop}
\label{pro:formula-for-F(0)}For any $\vec{u}$ in $\mathbb{R}^{d}\setminus\left\{ 0\right\} $,
with $\hat{\vec{u}}=\frac{\vec{u}}{\left|\vec{u}\right|}$, \begin{align*}
F^{(0)}(\vec{u}) & =-\pi|\vec{u}|^{d-1}\int_{[0,2]}\int_{S^{d-2}}(\rho\hat{\vec{u}}+\sqrt{2\rho-\rho^{2}}\vec{\omega}')\\
 & \phantom{=}\times\big|f\big(|\vec{u}|(\rho\hat{u}+\sqrt{2\rho-\rho^{2}}\vec{\omega}')\big)\big|^{2}\,\dif\mathcal{H}^{d-2}(\vec{\omega}')\,\sqrt{2\rho-\rho^{2}}^{d-2}\dif\rho\,,\end{align*}
where $\vec{\omega}'$ is in the orthogonal of $\vec{u}$.\end{prop}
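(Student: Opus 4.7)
The plan is to exploit the fact that the $r$-integral in the definition of $F^{(0)}(\vec{u})$ formally produces a Dirac delta which, together with completing the square in the phase, concentrates the remaining integral on the sphere of center $\vec{u}$ and radius $|\vec{u}|$. The stated formula is then obtained by elementary changes of variable on this sphere.

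First I would write $\vec{\eta}^{.2}-2\vec{\eta}\cdot\vec{u}=|\vec{\eta}-\vec{u}|^{2}-|\vec{u}|^{2}$, so that the zero set of the phase is the sphere $S_{\vec{u}}=\{\vec{\eta}:|\vec{\eta}-\vec{u}|=|\vec{u}|\}$ through the origin. To turn the formal computation into a rigorous one, I would insert a regularizing factor $e^{-\epsilon r}$, apply Fubini to the (now absolutely convergent) double integral, and invoke the distributional identity
\[
-2\Re\int_{0}^{\infty}e^{-(\epsilon+is)r}\dif r=\frac{-2\epsilon}{\epsilon^{2}+s^{2}}\xrightarrow[\epsilon\to0^{+}]{}-2\pi\delta(s),
\]
tested against the Schwartz function $\vec{\eta}\mapsto\vec{\eta}|f(\vec{\eta})|^{2}$ with $s=|\vec{\eta}-\vec{u}|^{2}-|\vec{u}|^{2}$. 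Dominated convergence in $\epsilon$ then yields
\[
F^{(0)}(\vec{u})=-2\pi\int_{\mathbb{R}^{d}}\delta\big(|\vec{\eta}-\vec{u}|^{2}-|\vec{u}|^{2}\big)\,\vec{\eta}|f(\vec{\eta})|^{2}\dif\vec{\eta}.
\]
Since the gradient of $|\vec{\eta}-\vec{u}|^{2}-|\vec{u}|^{2}$ has norm $2|\vec{u}|$ on $S_{\vec{u}}$, this collapses to $-\tfrac{\pi}{|\vec{u}|}\int_{S_{\vec{u}}}\vec{\eta}|f(\vec{\eta})|^{2}\dif\mathcal{H}^{d-1}$.

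It then remains to parameterize $S_{\vec{u}}$. I would first write $\vec{\eta}=\vec{u}+|\vec{u}|\vec{\omega}$ with $\vec{\omega}\in S^{d-1}$, which contributes a factor $|\vec{u}|^{d-1}$, then decompose $\vec{\omega}=\cos\theta\,\hat{\vec{u}}+\sin\theta\,\vec{\omega}'$ with $\vec{\omega}'\in S^{d-2}\subset\hat{\vec{u}}^{\perp}$ and $\theta\in[0,\pi]$, so that the spherical measure becomes $\sin^{d-2}\theta\dif\theta\dif\mathcal{H}^{d-2}(\vec{\omega}')$. Finally the substitution $\rho=1+\cos\theta$ gives $\sin\theta=\sqrt{2\rho-\rho^{2}}$ and $\vec{\eta}=|\vec{u}|(\rho\hat{\vec{u}}+\sqrt{2\rho-\rho^{2}}\,\vec{\omega}')$; gathering the resulting powers of $|\vec{u}|$ and $\sqrt{2\rho-\rho^{2}}$ produces the claimed formula.

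The only delicate step is the first one: the defining double integral is not absolutely convergent in $(r,\vec{\eta})$, so Fubini is not directly available. The regularization above is the cleanest way around this; alternatively one can first perform a spherical decomposition in the variable $\vec{\zeta}=\vec{\eta}-\vec{u}$ and rewrite the $r$-integral as a one-dimensional oscillatory integral in $|\vec{\zeta}|^{2}$, which becomes absolutely convergent once one has integrated in the angular variables using the Schwartz decay of $\vec{\eta}\mapsto\vec{\eta}|f(\vec{\eta})|^{2}$. Once this distributional identity is secured, the remaining surface-measure computation uses only the two elementary changes of variable described above.
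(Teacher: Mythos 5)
Your proposal follows essentially the same route as the paper's own proof: insert the regularizing factor $e^{-\varepsilon r}$, apply Fubini to the now absolutely convergent double integral, let $\varepsilon\to0^{+}$ to produce $\pi\delta(\vec{\eta}^{.2}-2\vec{\eta}.\vec{u})$, divide by $|\nabla(\vec{\eta}^{.2}-2\vec{\eta}.\vec{u})|=2|\vec{u}|$ to reduce to a surface integral over the sphere $\vec{u}+|\vec{u}|S^{d-1}$, rescale by $|\vec{u}|$, and then parametrize the unit sphere centered at $\hat{\vec{u}}$; your detour through the polar angle $\theta$ with $\rho=1+\cos\theta$ is just a reparametrization of the paper's map $(\rho,\vec{\omega}')\mapsto\rho\hat{\vec{u}}+\sqrt{2\rho-\rho^{2}}\,\vec{\omega}'$. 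All of this is correct in substance.

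One point at the very end deserves attention. Carried out honestly, your substitution gives $\sin^{d-2}\theta\,\dif\theta=\sqrt{2\rho-\rho^{2}}^{\,d-3}\,\dif\rho$, i.e.\ the surface-measure factor $\sqrt{2\rho-\rho^{2}}^{\,d-3}$, not the factor $\sqrt{2\rho-\rho^{2}}^{\,d-2}$ appearing in the statement (and asserted without computation in the paper's proof). A direct Jacobian computation confirms this: the tangent vector $\partial_{\rho}\big(\rho\hat{\vec{u}}+\sqrt{2\rho-\rho^{2}}\,\vec{\omega}'\big)=\hat{\vec{u}}+\tfrac{1-\rho}{\sqrt{2\rho-\rho^{2}}}\vec{\omega}'$ has norm $(2\rho-\rho^{2})^{-1/2}$, while each of the $d-2$ angular directions contributes a factor $\sqrt{2\rho-\rho^{2}}$, so the Gram determinant gives $\sqrt{2\rho-\rho^{2}}^{\,d-3}$; equivalently, for $d=3$ the exponent $d-3=0$ reproduces the area $4\pi$ of the unit sphere, whereas the exponent $d-2=1$ gives $\pi^{2}$. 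So your bookkeeping, done carefully, does \emph{not} ``produce the claimed formula'' but a corrected version of it with exponent $d-3$; the discrepancy is an off-by-one in the stated Jacobian rather than a flaw in your method, but you should either flag it explicitly or carry out the Jacobian computation instead of asserting that the powers gather to the formula as printed. (None of the later qualitative conclusions of the paper depend on this exponent.)
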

\begin{proof}
We introduce a parameter $\varepsilon$ to permute the integrals\begin{align*}
F^{(0)}\left(\vec{u}\right) & =\lim_{\varepsilon\to0^{+}}-2\Re\int_{0}^{+\infty}e^{-r\varepsilon}\int_{\mathbb{R}_{\eta}^{d}}e^{-ir(\vec{\eta}^{.2}-2\vec{\eta}.\vec{u})}g(\vec{\eta})\dif\vec{\eta}\dif r\\
 & =\lim_{\varepsilon\to0^{+}}-2\Re\int_{0}^{+\infty}\int_{\mathbb{R}_{\eta}^{d}}e^{-ir(\vec{\eta}^{.2}-2\vec{\eta}.\vec{u})-r\varepsilon}g(\vec{\eta})\dif\vec{\eta}\dif r\\
 & =\lim_{\varepsilon\to0^{+}}-2\Re\int_{\mathbb{R}_{\eta}^{d}}\int_{0}^{+\infty}e^{-ir(\vec{\eta}^{.2}-2\vec{\eta}.\vec{u})-r\varepsilon}\dif rg(\vec{\eta})\dif\vec{\eta}\\
 & =\lim_{\varepsilon\to0^{+}}-2\int_{\mathbb{R}_{\eta}^{d}}\Big(\Re\int_{0}^{+\infty}e^{-r\left[i(\vec{\eta}^{.2}-2\vec{\eta}.\vec{u})+\varepsilon\right]}\dif r\Big)g(\vec{\eta})\dif\vec{\eta}\,.\end{align*}
The time integral can be explicitly computed\[
\int_{0}^{+\infty}e^{-r\left[i(\vec{\eta}^{.2}-2\vec{\eta}.\vec{u})+\varepsilon\right]}\dif r=\frac{\varepsilon-i(\vec{\eta}^{.2}-2\vec{\eta}.\vec{u})}{(\vec{\eta}^{.2}-2\vec{\eta}.\vec{u})^{2}+\varepsilon^{2}}\]
and taking the real part and the limit as $\varepsilon\to0^{+}$ we
obtain \[
\lim_{\varepsilon\to0^{+}}\Re\int_{0}^{+\infty}e^{-r\left[i(\vec{\eta}^{.2}-2\vec{\eta}.\vec{u})+\varepsilon\right]}\dif r=\pi\delta(\vec{\eta}^{.2}-2\vec{\eta}.\vec{u})\,.\]
Thus\begin{align*}
F^{(0)}(\vec{u}) & =-2\pi\int_{\mathbb{R}_{\eta}^{d}}\delta(\vec{\eta}^{.2}-2\vec{\eta}.\vec{u})g(\vec{\eta})\dif\vec{\eta}\\
 & =-2\pi\int_{\vec{u}+|\vec{u}|S^{d-1}}g(\vec{\eta})\frac{\dif\mathcal{H}^{d-1}(\vec{\eta})}{\left|\nabla(\vec{\eta}^{.2}-2\vec{\eta}.\vec{u})\right|}\\
 & =-\pi|\vec{u}|^{-1}\int_{\vec{u}+|\vec{u}|S^{d-1}}g(\vec{\eta})\dif\mathcal{H}^{d-1}(\vec{\eta})\end{align*}
with $\dif\mathcal{H}^{d-1}$ the $(d-1)$-dimensional Hausdorff measure.
Then, using $\hat{\vec{u}}=\vec{u}/|\vec{u}|$,\begin{align}
F^{(0)}(\vec{u}) & =-\pi|\vec{u}|^{-1}\int_{\vec{u}+|\vec{u}|S^{d-1}}\vec{\eta}|f(\vec{\eta})|^{2}\dif\mathcal{H}^{d-1}(\vec{\eta})\nonumber \\
 & =-\pi|\vec{u}|^{-1}\int_{\hat{\vec{u}}+S^{d-1}}|\vec{u}|\vec{\eta}'\,|f(|\vec{u}|\vec{\eta}')|^{2}|\vec{u}|^{d-1}\dif\mathcal{H}^{d-1}(\vec{\eta}')\nonumber \\
 & =-\pi|\vec{u}|^{d-1}\int_{\hat{\vec{u}}+S^{d-1}}\vec{\eta}|f(|\vec{u}|\vec{\eta})|^{2}\dif\mathcal{H}^{d-1}(\vec{\eta})\label{eq:F(0)-troisieme-formule}\end{align}
we can now parametrize the sphera in a fashion which shows the particular
importance of the direction of~$\vec{u}$. We define the change of
variable\begin{align*}
[0,2]\times S^{d-2} & \to\hat{\vec{u}}+S^{d-1}\\
(\rho,\vec{\omega}') & \mapsto\rho\hat{\vec{u}}+\sqrt{2\rho-\rho^{2}}\vec{\omega}'\end{align*}
whose inverse transformation is given by\[
\rho=\vec{\eta}.\hat{\vec{u}}\qquad\mbox{and}\qquad\vec{\omega}'=\frac{\vec{\eta}-(\vec{\eta}.\hat{\vec{u}})\hat{\vec{u}}}{|\vec{\eta}-(\vec{\eta}.\hat{\vec{u}})\hat{\vec{u}}|}\,.\]
The jacobian under this change of variable is $\sqrt{2\rho-\rho^{2}}^{d-2}$
as $|\vec{\eta}-(\vec{\eta}.\hat{\vec{u}})\hat{\vec{u}}|=\sqrt{2\rho-\rho^{2}}$
and thus we get the result.\end{proof}
\begin{prop}
In dimension $d\geq3$ the limit differential equation~(\ref{eq:equation-limite1})
has a unique maximal solution~$\vec{\xi}^{\,(0)}$.

The norm of this solution decreases with the time.

If $\min\{|f(\vec{\eta})|,\,\vec{\eta}\in\bar{B}(0,2|\vec{\xi}_{0}^{\,(0)}|)\}$
is strictly positive then this solution is defined on $\mathbb{R}^{+}$,
and $\vec{\xi}_{t}^{\,(0)}$ converges to $0$ as $t\to+\infty$.\end{prop}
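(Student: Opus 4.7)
The plan is to apply the classical Cauchy--Lipschitz theorem on the open set $\Omega=\mathbb{R}^{d}\setminus\{0\}$ and then exploit the geometric formula of Proposition~\ref{pro:formula-for-F(0)} for the two monotonicity statements. For local existence and uniqueness I will first recast $F^{(0)}$ in a form where smoothness on $\Omega$ is manifest: applying the change of variables $\vec{\eta}=\vec{u}+|\vec{u}|\vec{\omega}$ to the intermediate expression $F^{(0)}(\vec{u})=-\pi|\vec{u}|^{-1}\int_{\vec{u}+|\vec{u}|S^{d-1}}g(\vec{\eta})\dif\mathcal{H}^{d-1}(\vec{\eta})$ already isolated in the proof of Proposition~\ref{pro:formula-for-F(0)} gives
\[
F^{(0)}(\vec{u})=-\pi|\vec{u}|^{d-2}\int_{S^{d-1}}g(\vec{u}+|\vec{u}|\vec{\omega})\dif\mathcal{H}^{d-1}(\vec{\omega}),
\]
which is $C^{\infty}$ on $\Omega$ because $g$ is Schwartz and $|\vec{u}|$ is smooth away from the origin. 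Cauchy--Lipschitz then supplies a unique maximal $C^{1}$ solution $\vec{\xi}^{\,(0)}\colon[0,T_{max})\to\Omega$.

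For monotonicity of the norm I will differentiate $\tfrac{1}{2}|\vec{\xi}_{t}^{\,(0)}|^{2}$ and plug in Proposition~\ref{pro:formula-for-F(0)}. Since $\vec{\omega}'$ is orthogonal to $\vec{u}$, the scalar product simplifies to $\vec{u}\cdot(\rho\hat{\vec{u}}+\sqrt{2\rho-\rho^{2}}\vec{\omega}')=|\vec{u}|\rho\geq 0$ on $\rho\in[0,2]$, whence $\vec{u}\cdot F^{(0)}(\vec{u})\leq 0$ and $|\vec{\xi}_{t}^{\,(0)}|$ is non-increasing.

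For the last item, assume $m:=\min_{\bar{B}(0,2|\vec{\xi}_{0}|)}|f|>0$. The monotonicity just proved keeps the sphere $\vec{\xi}_{t}^{\,(0)}+|\vec{\xi}_{t}^{\,(0)}|S^{d-1}$ inside $\bar{B}(0,2|\vec{\xi}_{0}|)$, so $|f|^{2}\geq m^{2}$ on it and the preceding computation upgrades to $\tfrac{\dif}{\dif t}|\vec{\xi}_{t}^{\,(0)}|\leq -K_{d}|\vec{\xi}_{t}^{\,(0)}|^{d-1}$ with $K_{d}=\pi m^{2}|S^{d-2}|\int_{0}^{2}\rho\sqrt{2\rho-\rho^{2}}^{\,d-2}\dif\rho>0$. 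Symmetrically, bounding $|\rho\hat{\vec{u}}+\sqrt{2\rho-\rho^{2}}\vec{\omega}'|\leq 2$ and $|f|\leq\|f\|_{\infty}$ in Proposition~\ref{pro:formula-for-F(0)} yields $|F^{(0)}(\vec{u})|\leq C|\vec{u}|^{d-1}$, hence $\tfrac{\dif}{\dif t}|\vec{\xi}_{t}^{\,(0)}|\geq -C|\vec{\xi}_{t}^{\,(0)}|^{d-1}$. Integrating $\tfrac{\dif}{\dif t}|\vec{\xi}_{t}^{\,(0)}|^{-(d-2)}$ in both directions (using $d\geq 3$) will produce the sandwich
\[
\bigl(|\vec{\xi}_{0}|^{-(d-2)}+(d-2)Ct\bigr)^{-\tfrac{1}{d-2}}\leq|\vec{\xi}_{t}^{\,(0)}|\leq\bigl(|\vec{\xi}_{0}|^{-(d-2)}+(d-2)K_{d}t\bigr)^{-\tfrac{1}{d-2}}.
\]
The lower bound stays strictly positive on every finite interval, preventing the maximal solution from leaving $\Omega$ and therefore forcing $T_{max}=+\infty$, while the upper bound tends to $0$ as $t\to+\infty$, giving $\vec{\xi}_{t}^{\,(0)}\to 0$.

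The step I expect to require the most care is this lower bound. Because $F^{(0)}$ fails to be Lipschitz at the equilibrium $0$ (the prefactor $|\vec{u}|^{d-2}$ degenerates there), Cauchy--Lipschitz alone does not exclude finite-time extinction of $|\vec{\xi}_{t}^{\,(0)}|$, so the explicit sub-solution argument on $|\vec{\xi}_{t}^{\,(0)}|^{-(d-2)}$ together with the assumption $d\geq 3$ is genuinely needed.
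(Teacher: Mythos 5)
Your proof is correct and follows essentially the same route as the paper: local Lipschitz regularity of $F^{(0)}$ away from the origin via a spherical-integral representation, monotonicity of the norm from the orthogonality of $\vec{\omega}'$ to $\vec{u}$, and a two-sided differential inequality for the norm integrated through $|\vec{\xi}_{t}^{\,(0)}|^{-(d-2)}$ (the paper does the equivalent computation with $|\vec{\xi}_{t}^{\,(0)}|^{2}$) to obtain both $T_{max}=+\infty$ and decay to $0$. Your explicit remark that the lower bound is what excludes finite-time extinction at the origin, where $F^{(0)}$ degenerates and Cauchy--Lipschitz no longer applies, is a sound clarification of a point the paper passes over quickly.
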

\begin{proof}
We first show that on any compact subset of $\mathbb{R}^{d}\setminus\{0\}$
the map $F^{(0)}$ is Lipschitz so that the theorem of Cauchy-Lipschitz
applies. Indeed on $C(r,R)\times\bar{B}(0,2)$, with $C(r,R)=\{\vec{x}\in\mathbb{R}^{d},\, r\leq|\vec{x}|\leq R\}$
and $0<r<R$, the application\[
(\vec{u},\vec{\eta})\mapsto\vec{\eta}|f(|\vec{u}|\vec{\eta})|^{2}|\vec{u}|^{d-1}\]
is smooth and thus Lipschitz with a constant $L$, and\begin{align*}
|F^{(0)}(\vec{u})-F^{(0)}(\vec{v})| & =\pi\left||\vec{u}|^{d-1}\int_{\hat{\vec{u}}+S^{d-1}}\vec{\eta}|f(|\vec{u}|\vec{\eta})|^{2}\dif\mathcal{H}^{d-1}(\vec{\eta})\right.\\
 & \phantom{=\pi}\qquad\left.-|\vec{v}|^{d-1}\int_{\hat{\vec{v}}+S^{d-1}}\vec{\eta}|f(|\vec{v}|\vec{\eta})|^{2}\dif\mathcal{H}^{d-1}(\vec{\eta})\right|\\
 & \leq\pi L(|\hat{\vec{u}}-\hat{\vec{v}}|+|\vec{u}-\vec{v}|)\\
 & \leq L'|\vec{u}-\vec{v}|\end{align*}
where we used an equivalent formulas for $F^{(0)}$ derived in Equation~(\ref{eq:F(0)-troisieme-formule})
in the proof of Proposition~\ref{pro:formula-for-F(0)}. Thus there
exists a unique solution $\vec{\xi}^{\,(0)}$ defined on a maximal
interval $[0,T_{\max})$.

We then prove that the norm of this solution is decreasing, indeed
\begin{align*}
\frac{\dif}{\dif t}\frac{1}{2}\big|\vec{\xi}_{t}^{\,(0)}\big|^{2} & =\vec{\xi}_{t}^{\,(0)}.\vec{\xi}_{t}^{\,(0)\prime}\\
 & =\vec{\xi}_{t}^{\,(0)}.F^{(0)}\big(\vec{\xi}_{t}^{\,(0)}\big)\\
 & =-\pi\big|\vec{\xi}_{t}^{\,(0)}\big|^{d-1}\int_{[0,2]}\int_{S^{d-2}}\rho\big(\hat{\vec{\xi}}_{t}^{\,(0)}+\sqrt{2\rho-\rho^{2}}\vec{\omega}'\big).\vec{\xi}_{t}^{\,(0)}\\
 & \phantom{=}\quad\big|f\big(\big|\vec{\xi}_{t}^{\,(0)}\big|(\rho\vec{u}'+\sqrt{2\rho-\rho^{2}}\vec{\omega}')\big)\big|^{2}\dif\mathcal{H}^{d-2}(\vec{\omega}')\,\sqrt{2\rho-\rho^{2}}^{d-2}\dif\rho\\
 & =-\pi\big|\vec{\xi}_{t}^{\,(0)}\big|^{d}\int_{\left[0,2\right]}\int_{S^{d-2}}\rho\big|f\big(\big|\vec{\xi}_{t}^{\,(0)}\big|(\rho\vec{u}'+\sqrt{2\rho-\rho^{2}}\vec{\omega}')\big)\big|^{2}\\
 & \phantom{=}\qquad\qquad\qquad\qquad\qquad\qquad\qquad\dif\mathcal{H}^{d-2}(\vec{\omega}')\,\sqrt{2\rho-\rho^{2}}^{d-2}\dif\rho\\
 & \leq0\,.\end{align*}
 Thus $\vec{\xi}^{\,(0)}$ is necessarily bounded and cannot blow
up. The maximal interval is thus necessarily $[0,+\infty)$.

If $f$ does not vanish we have even more information, we know that\[
-C_{M,f}\big|\vec{\xi}_{t}^{\,(0)}\big|^{d}\leq\od{}t\big|\vec{\xi}_{t}^{\,(0)}\big|^{2}\leq-C_{m,f}\big|\vec{\xi}_{t}^{\,(0)}\big|^{d}\]
where $C_{M,f}=C_{d}\max_{\bar{B}(0,2|\vec{\xi}^{\,(0)}|)}|f|$ and
$C_{m,f}=C_{d}\min_{\bar{B}(0,2|\vec{\xi}^{\,(0)}|)}|f|$ with $C_{d}=2\pi\int_{[0,2]}\rho\sqrt{2\rho-\rho^{2}}^{d-2}\dif\rho\mathcal{H}^{d-2}(S^{d-2})$.
By an integration we conclude that $\big|\vec{\xi}_{t}^{\,(0)}\big|^{2}$
is in the interval bounded by the quantities \[
\big(|\vec{\xi}_{0}|^{1-d/2}+(\frac{d}{2}-1)Ct\big)^{-\frac{2}{d-2}}\]
for $C=C_{M,f}$ and $C_{m,f}$. Thus $\vec{\xi}_{t}^{\,(0)}$ doesn't
exit from any compact set of $\mathbb{R}^{d}\setminus\{0\}$ in finite
time, and this \emph{a priori} estimate implies $T_{\max}=+\infty$.
\end{proof}
\begin{rem}
\label{rem:Moy_Xi-Xi}We have the control, for $0<hr<t$, \[
\Big|\fint_{t-hr}^{t}\vec{\xi}^{\,(h)}(\sigma)\dif\sigma-\vec{\xi}^{\,(h)}(t)\Big|\leq\frac{1}{2}\big\|\vec{\xi}^{\,(h)\prime}\big\|_{\infty,[t-hr,t]}hr\leq C_{g}^{1}hr\,.\]
\end{rem}
\begin{proof}
Indeed\[
\fint_{t-hr}^{t}\vec{\xi}^{\,(h)}\left(\sigma\right)\dif\sigma-\vec{\xi}^{\,(h)}\left(t\right)=\fint_{t-hr}^{t}\int_{t}^{\sigma}\vec{\xi}^{\,(h)\prime}\left(v\right)\dif v\dif\sigma\,,\]
and by taking the norm we get\[
\Big\|\fint_{t-hr}^{t}\vec{\xi}^{\,(h)}(\sigma)\dif\sigma-\vec{\xi}^{\,(h)}(t)\Big\|\leq\big\|\vec{\xi}^{\,(h)\prime}\big\|_{\infty,[t-hr,t]}\fint_{t-hr}^{t}(t-\sigma)\dif\sigma\]
which gives the result.
\end{proof}

\section{\label{sec:Comparison}Control of the Difference Between the Two
Equations}

We set the application $F^{(h)}$ from $\mathcal{B}(\mathbb{R}^{+};\mathbb{R}^{d})$
to $\mathcal{C}(\mathbb{R}^{+};\mathbb{R}^{d})$, defined for $\vec{u}\in\mathcal{B}(\mathbb{R}^{+};\mathbb{R}^{d})$
by\[
F^{(h)}(\vec{u})(t)=-2\Re\int_{\mathbb{R}^{d}}\int_{0}^{t/h}e^{-ir\left(\vec{\eta}^{.2}-2\vec{\eta}.\vec{u}_{t}\right)}\vec{\eta}|f(\vec{\eta})|^{2}\dif r\dif\vec{\eta}\,.\]

\begin{lem}
In dimension $d\geq3$, \[
\big\|\mathcal{F}^{(h)}(\vec{\xi}^{\,(h)})-F^{(h)}(\vec{\xi}^{\,(h)})\big\|_{\infty}\leq C_{g}^{2}h^{\nu(d)-\delta}\]
with $0<\delta<\nu(d)=\frac{d-2}{4}$ and $C_{g}^{2}=4\|g\|_{L^{1}}+2\pi^{d/2}(C_{g}^{1}\|\hat{g}'\|_{L^{1}}+\|\hat{g}\|_{L^{1}})$.\end{lem}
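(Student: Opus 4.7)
The quantity to estimate is
\begin{equation*}
\mathcal{F}^{(h)}(\vec{\xi}^{\,(h)})(t) - F^{(h)}(\vec{\xi}^{\,(h)})(t) = -2\Re\int_0^{t/h}\!\int_{\mathbb{R}^d} e^{-ir\vec{\eta}^{.2}}\bigl[e^{2ir\vec{\eta}\cdot\vec{w}_r} - e^{2ir\vec{\eta}\cdot\vec{\xi}_t^{\,(h)}}\bigr]g(\vec{\eta})\,\dif\vec{\eta}\,\dif r,
\end{equation*}
where $\vec{w}_r:=\fint_{t-hr}^t\vec{\xi}_\sigma^{\,(h)}\,\dif\sigma$. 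The key smallness input is Remark~\ref{rem:Moy_Xi-Xi}, which gives $|\vec{w}_r-\vec{\xi}_t^{\,(h)}|\leq C_g^1 hr$. The plan is to split the $r$-integral into three distinct regimes and bound each with a different estimate, then optimize the cutoffs.

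For each $r$, I would use three upper bounds on the inner $\vec{\eta}$-integral $J(r)$: (A) the trivial one, $|J(r)|\leq 2\|g\|_{L^1}$; (B) the Parseval estimate used in the proof of the uniform boundedness of $\mathcal{F}^{(h)}$, applied separately to each exponential, which gives $|J(r)|\leq 2(\pi/r)^{d/2}\|\hat{g}\|_{L^1}$; and (D) a mean-value-plus-Parseval estimate obtained by writing $e^{ia}-e^{ib}=i(a-b)\int_0^1 e^{i(b+s(a-b))}\dif s$, pulling out $|\vec{\eta}||\vec{w}_r-\vec{\xi}_t^{\,(h)}|$, and then applying the same Parseval trick to the vector symbol $\vec{\eta}g(\vec{\eta})$, whose Fourier transform is $i\nabla\hat{g}$ so that $\|\widehat{\vec{\eta}g}\|_{L^1}=\|\hat{g}'\|_{L^1}$; this yields $|J(r)|\leq 2\pi^{d/2}C_g^1\|\hat{g}'\|_{L^1}\,h\,r^{2-d/2}$.

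I would then pick cutoffs $0<r_1<r_2\leq t/h$ and apply (A) on $[0,r_1]$, (D) on $[r_1,r_2]$, and (B) on $[r_2,t/h]$, so that integration produces exactly the three summands
\begin{equation*}
4\|g\|_{L^1}r_1 \;+\; 2\pi^{d/2}C_g^1\|\hat{g}'\|_{L^1}\,h\int_{r_1}^{r_2}r^{2-d/2}\dif r \;+\; 4\pi^{d/2}\|\hat{g}\|_{L^1}\int_{r_2}^{\infty}r^{-d/2}\dif r,
\end{equation*}
whose structural constants match those of $C_g^2$. The natural scale is $r_2\sim h^{-1/2}$, at which bounds (B) and (D) cross over ($hr^2\sim 1$), giving both Fourier contributions the target rate $h^{(d-2)/4}=h^{\nu(d)}$; then $r_1$ is taken of order $h^{\nu(d)-\delta}$ so that the trivial contribution (A) is also of order $h^{\nu(d)-\delta}$.

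The main obstacle is a uniform treatment for all $d\geq 3$, since the exponent $2-d/2$ in the middle integral behaves very differently with $d$: positive for $d=3$, zero for $d=4$, mildly negative for $d=5$, logarithmic for $d=6$, and truly singular for $d\geq 7$. In the borderline and higher-dimensional cases, integrating $r^{2-d/2}$ from $r_1$ to $r_2$ picks up a factor $\log(r_2/r_1)$ or $r_1^{3-d/2}$ which forces a slight shift of $r_1$ and $r_2$ away from their naive optimal values; the free parameter $\delta>0$ in the rate is precisely what absorbs these logarithmic and boundary losses. A secondary technical point is to justify the Parseval estimate (D) carefully, treating the $r$-dependence of the phase through $\vec{w}_r$ and identifying the Fourier-$L^1$ norm of the vector-valued amplitude $\vec{\eta}g(\vec{\eta})$ as $\|\hat{g}'\|_{L^1}$.
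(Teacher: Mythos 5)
Your argument is essentially the paper's proof: the paper performs the same recentering of the $\vec{\eta}$-integral at $\vec{\xi}_t^{\,(h)}$, bounds the inner integral by $2\pi^{d/2}\big(C_g^1\|\hat g'\|_{L^1}+\|\hat g\|_{L^1}\big)\min(r^2h,1)\,r^{-d/2}$ using exactly your estimates (A), (B) and (D) --- with (D) phrased as $\|\hat g(\cdot-2r(\vec w_r-\vec\xi_t^{\,(h)}))-\hat g\|_{L^1}\le 2C_g^1\|\hat g'\|_{L^1}hr^2$ rather than by taking the Fourier transform of $\vec\eta g$, which is the same computation --- and then replaces your explicit three-region splitting by a single cutoff $\Lambda=h^\mu$ together with the interpolation $\min(r^2h,1)r^{-d/2}\le h^\theta r^{2\theta-d/2}$ on $[\Lambda,t/h]$, an equivalent optimization giving $\mu\to(d-2)/4$ as $\theta\to(d-2)/4$. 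The high-dimensional obstruction you flag is genuine, but you should know it is not cured by the paper either: the interpolation inequality forces $\theta\le 1$, so for $d\ge 7$ the paper's exponent is capped at $\mu=2/(d-4)<(d-2)/4$, which is precisely the lower-endpoint domination of your middle integral (so the loss there is not absorbed by an arbitrarily small $\delta$), while for $3\le d\le 6$ --- with the logarithm at $d=6$ absorbed by $\delta$ --- both arguments deliver the stated rate.
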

\begin{proof}
Let $t$ be in $\mathbb{R}^{+}$ and $\Lambda>0$, by a change of
variable \begin{align*}
\big| & \mathcal{F}^{(h)}(\vec{\xi}^{\,(h)})(t)-F^{(h)}(\vec{\xi}^{\,(h)})(t)\big|\\
 & \leq4\int_{0}^{\Lambda}\int_{\mathbb{R}^{d}}|g(\vec{\eta})|\dif\vec{\eta}\dif r\\
 & \phantom{=}+2\int_{\Lambda}^{t/h}\Big|\int_{\mathbb{R}^{d}}e^{-ir(\vec{\eta}^{.2}-2\vec{\eta}.\vec{\xi}_{t}^{\,(h)})}\big(e^{2ir\vec{\eta}.(\fint_{t-hr}^{t}\vec{\xi}_{\sigma}^{\,(h)}\dif\sigma-\vec{\xi}_{t}^{\,(h)})}-1\big)g(\vec{\eta})\dif\vec{\eta}\Big|\dif r\\
 & \leq4\Lambda\left\Vert g\right\Vert _{L^{1}}\\
 & \phantom{=}+\!2\negmedspace\int_{\Lambda}^{t/h}\negmedspace\Big|\negmedspace\int_{\mathbb{R}^{d}}e^{-ir\vec{\eta}^{\prime.2}}\!\big(e^{2ir(\vec{\eta}'+\vec{\xi}_{t}^{\,(h)}).(\fint_{t-hr}^{t}\vec{\xi}_{\sigma}^{\,(h)}\dif\sigma-\vec{\xi}_{t}^{\,(h)})}-1\big)g(\vec{\eta}^{\,\prime}+\vec{\xi}_{t}^{\,(h)})\!\dif\vec{\eta}^{\,\prime}\Big|\!\dif r\end{align*}
as $\big|e^{ir|\vec{\xi}_{t}^{\,(h)}|^{2}}\big|=1$. We then control
the internal integral using Parseval's equality. Indeed we have the
Fourier transforms \[
\mathcal{F}(e^{-ir\vec{\eta}^{\prime.2}})(y)=\Big(\frac{\pi}{-ir}\Big)^{d/2}e^{i\frac{\vec{y}^{2}}{4r}}\,,\]
\[
\mathcal{F}\big(g(\vec{\eta}'+\vec{\xi}_{t}^{\,(h)})\big)(y)=e^{i\vec{\xi}_{t}^{\,(h)}.\vec{y}}\hat{g}(\vec{y})\]
and\begin{multline*}
\mathcal{F}\big(e^{2ir(\vec{\eta}'+\vec{\xi}_{t}^{\,(h)}).(\fint_{t-hr}^{t}\vec{\xi}_{\sigma}^{\,(h)}d\sigma-\vec{\xi}_{t}^{\,(h)})}g(\vec{\eta}'+\vec{\xi}_{t}^{\,(h)})\big)(\vec{y})\\
=e^{i\vec{\xi}_{t}^{\,(h)}.\vec{y}}\hat{g}\big(\vec{y}-2r\big(\fint_{t-hr}^{t}\vec{\xi}_{\sigma}^{\,(h)}d\sigma-\vec{\xi}_{t}^{\,(h)}\big)\big)\,.\end{multline*}
The difference between the two last Fourier transforms can be estimated
using Remark~\ref{rem:Moy_Xi-Xi}, and thus \[
\big\|\hat{g}(\vec{y}-2r(\fint_{t-hr}^{t}\vec{\xi}_{\sigma}^{\,(h)}d\sigma-\vec{\xi}_{t}^{\,(h)}))-\hat{g}(\vec{y})\big\|_{L^{1}}\leq2\min\left\{ C_{g}^{1}\|\hat{g}'\|_{L^{1}}r^{2}h,\|\hat{g}\|_{L^{1}}\right\} \,.\]
The internal integral is then controled as\begin{multline*}
\big|\int_{\mathbb{R}^{d}}e^{-ir\vec{\eta}^{\prime.2}}\big(e^{2ir(\vec{\eta}'+\vec{\xi}_{t}^{\,(h)}).(\fint_{t-hr}^{t}\vec{\xi}_{\sigma}^{\,(h)}\dif\sigma-\vec{\xi}_{t}^{\,(h)})}-1\big)g(\vec{\eta}'+\vec{\xi}_{t}^{\,(h)})\dif\vec{\eta}'\big|\\
\leq2\pi^{d/2}\big(C_{g}^{1}\|\hat{g}'\|_{L^{1}}+\|\hat{g}\|_{L^{1}}\big)\min(r^{2}h,1)\, r^{-d/2}\,.\end{multline*}
By interpolation, $\min(r^{2}h,1)\, r^{-d/2}\leq h^{\theta}r^{2\theta-\frac{d}{2}}$,
and we then obtain\[
\big\|\mathcal{F}^{(h)}(\vec{\xi}^{\,(h)})-F^{(h)}(\vec{\xi}^{\,(h)})\big\|_{\infty}\leq4\Lambda\|g\|_{L^{1}}+2\pi^{d/2}\big(C_{g}^{1}\|\hat{g}'\|_{L^{1}}+\|\hat{g}\|_{L^{1}}\big)h^{\theta}\Lambda^{2\theta-\frac{d}{2}+1}\,.\]
An optimization of the parameter $\theta$ and of $\mu$ in $\Lambda=h^{\mu}$
shows that we can take $\mu$ as high as $\frac{d-2}{4}-\delta$ for
any $\delta>0$. (We then get $\theta=\frac{\mu d}{2\left(1+2\mu\right)}$.)
This concludes the proof.\end{proof}
\begin{lem}
In dimension $d\geq3$, for $h,\, t>0$,\[
\big\| F^{(h)}(\vec{\xi}^{\,(h)})-F^{(0)}(\vec{\xi}^{\,(h)})\big\|_{\infty,[T,+\infty)}\leq C_{g}^{3}\Big(\frac{h}{T}\Big)^{\frac{d}{2}-1}\,,\]
with $C_{g}^{3}=\Big(\frac{d}{2}-1\Big)\pi^{d/2}\|\hat{g}\|_{L^{1}}$.\end{lem}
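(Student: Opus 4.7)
The plan is to exploit the fact that $F^{(h)}$ and $F^{(0)}$ differ \emph{only} in the range of the $r$-variable: both carry the same integrand $e^{-ir(\vec{\eta}^{.2}-2\vec{\eta}.\vec{u}_{t})}g(\vec{\eta})$ evaluated at the instantaneous value $\vec{u}_{t}$ (there is no time-average, in contrast with $\mathcal{F}^{(h)}$), with $F^{(h)}$ integrating $r$ over $[0,t/h]$ and $F^{(0)}$ over $[0,+\infty)$. The first step is therefore simply to rewrite the difference as the tail integral
\[
F^{(h)}(\vec{\xi}^{\,(h)})(t)-F^{(0)}(\vec{\xi}^{\,(h)})(t)=2\Re\int_{t/h}^{+\infty}\int_{\mathbb{R}^{d}}e^{-ir(\vec{\eta}^{.2}-2\vec{\eta}.\vec{\xi}_{t}^{\,(h)})}g(\vec{\eta})\dif\vec{\eta}\dif r\,,
\]
where Fubini (applied with a regularization $e^{-\varepsilon r}$ exactly as in the proof of Proposition~\ref{pro:formula-for-F(0)} if needed) legitimizes the swap of orders.

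Second, I translate $\vec{\eta}'=\vec{\eta}-\vec{\xi}_{t}^{\,(h)}$ in the inner integral to complete the square: the phase becomes $r(\vec{\eta}'^{.2}-|\vec{\xi}_{t}^{\,(h)}|^{2})$, and the $|\vec{\xi}_{t}^{\,(h)}|^{2}$ term produces only a unimodular factor. The inner integral reduces to $\int_{\mathbb{R}^{d}}e^{-ir\vec{\eta}'^{.2}}g(\vec{\eta}'+\vec{\xi}_{t}^{\,(h)})\dif\vec{\eta}'$, which is exactly of the type treated in Section~\ref{sec:parameter-dependent-equation} via Parseval's identity. Pairing the $L^{\infty}$ bound on the Fourier transform of the Fresnel Gaussian (proportional to $r^{-d/2}$) with the $L^{1}$ norm of $\hat{g}$ (invariant under translation of $g$ up to a unimodular phase), one obtains the pointwise bound $(\pi/r)^{d/2}\|\hat{g}\|_{L^{1}}$.

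Third, I integrate this pointwise bound in $r$ on the tail $[t/h,+\infty)$. The assumption $d\geq 3$ is precisely what ensures the convergence of $\int^{+\infty}r^{-d/2}\dif r$, and a direct computation gives a factor of order $(h/t)^{d/2-1}$. Finally, taking the supremum over $t\in[T,+\infty)$ replaces $t$ by $T$ on the right-hand side and yields the announced bound $C_{g}^{3}(h/T)^{d/2-1}$ (up to absorbing the numerical factor $2/(d/2-1)$ into the constant).

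I do not anticipate any genuine obstacle here: the whole argument is a direct recycling of the Parseval/stationary-phase bound already used in Section~\ref{sec:parameter-dependent-equation}, simply applied on the new $r$-interval $[t/h,+\infty)$ instead of $[1,t/h]$. The single substantive ingredient is the observation that, because $\vec{u}_{t}$ enters $F^{(h)}$ and $F^{(0)}$ only at the instant $t$ and not as a time-average, no nonlocal-in-time term is created, so the error really is a pure tail-in-$r$ estimate.
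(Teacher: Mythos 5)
Your proof is correct and is essentially the paper's own argument: the paper likewise writes the difference as the tail $\int_{t/h}^{+\infty}$ of the $r$-integral, bounds the inner $\vec{\eta}$-integral by $(\pi/r)^{d/2}\|\hat{g}\|_{L^{1}}$ via the same translation/Parseval (Fresnel) estimate already used in Section~\ref{sec:parameter-dependent-equation}, and then integrates in $r$ using $d\geq3$. The constant mismatch you mention (your computation yields a factor $\tfrac{2}{d/2-1}$ rather than the stated $\tfrac{d}{2}-1$) is not a gap in your argument but an inaccuracy in the paper itself, whose own displayed bound also produces $\tfrac{1}{d/2-1}\pi^{d/2}\|\hat{g}\|_{L^{1}}$ instead of the announced $C_{g}^{3}$.
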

\begin{proof}
Indeed\[
\big|F^{(h)}(\vec{\xi}^{\,(h)})(t)-F^{(0)}(\vec{\xi}^{\,(h)})(t)\big|\leq\pi^{d/2}\|\hat{g}\|_{L^{1}}\int_{t/h}^{+\infty}\frac{\dif r}{r^{d/2}}\]
which gives the result.
\end{proof}
As a corollary we get: 
\begin{lem}
In dimension $d\geq3$, for $h,\, T>0$ and $0<\delta<\frac{d-2}{4}$
\[
\big\| F^{(0)}(\vec{\xi}^{\,(h)})-\vec{\xi}^{\,(h)\prime}\big\|_{\infty,[\sqrt{h},+\infty)}\leq C_{g}^{2}h^{\frac{d-2}{4}-\delta}+C_{g}^{3}h^{\frac{d-2}{4}}\leq\delta(h)\,,\]
\begin{align*}
\Gamma_{t}^{(h)} & =\int_{0}^{t}\big|F^{(0)}(\vec{\xi}_{s}^{\,(h)})-\vec{\xi}_{s}^{\,(h)\prime}\big|\dif s\\
 & \leq\sqrt{h}+t\delta(h)=\delta_{1}(h)\,.\end{align*}
with $\delta(h)\to0$ as $h\to0$. 
\end{lem}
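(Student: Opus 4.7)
The plan is to obtain this statement essentially as a corollary of the two preceding lemmas, glued together by the triangle inequality and the defining identity $\vec{\xi}^{\,(h)\prime} = \mathcal{F}^{(h)}(\vec{\xi}^{\,(h)})$. Concretely, I would write
\[
\big|F^{(0)}(\vec{\xi}_s^{\,(h)}) - \vec{\xi}_s^{\,(h)\prime}\big| \leq \big|F^{(0)}(\vec{\xi}_s^{\,(h)}) - F^{(h)}(\vec{\xi}_s^{\,(h)})\big| + \big|F^{(h)}(\vec{\xi}_s^{\,(h)}) - \mathcal{F}^{(h)}(\vec{\xi}_s^{\,(h)})\big|,
\]
and estimate each term separately. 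The second lemma applied with $T=\sqrt{h}$ bounds the first term by $C_g^3(h/\sqrt{h})^{d/2-1} = C_g^3 h^{(d-2)/4}$ on $[\sqrt{h},+\infty)$, while the first lemma already bounds the second term by $C_g^2 h^{(d-2)/4 - \delta}$ globally. Adding these gives the pointwise estimate on $[\sqrt{h},+\infty)$.

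For the integral bound on $\Gamma_t^{(h)}$, the natural step is to cut the integral at time $\sqrt{h}$. On $[\sqrt{h}, t]$ the pointwise bound just established contributes at most $t\,\delta(h)$. On $[0,\sqrt{h}]$ I only need a uniform-in-$h$ bound on the integrand: the earlier proposition furnishes $\|\vec{\xi}^{\,(h)\prime}\|_\infty \leq 2 C_g^1$, and since $\vec{\xi}_s^{\,(h)}$ stays in a fixed compact subset of $\mathbb{R}^d\setminus\{0\}$ for small $s$ (for $h$ small), the smoothness of $F^{(0)}$ on such compacta, established in the proof of the limit-equation proposition, provides a uniform bound on $|F^{(0)}(\vec{\xi}_s^{\,(h)})|$ there. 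The contribution of this piece is therefore $O(\sqrt{h})$, which is exactly the $\sqrt{h}$ term in $\delta_1(h)$ (with the constants absorbed into the notation).

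The only real design choice is the cut-off time $\sqrt{h}$; this choice is dictated by the need to match the exponent $\tfrac{d}{2}-1$ appearing in the second lemma's $(h/T)^{d/2-1}$ to a rate comparable with $h^{(d-2)/4-\delta}$ from the first lemma, while keeping the $[0,\sqrt{h}]$ contribution to $\Gamma_t^{(h)}$ small. Beyond selecting this cut-off, the statement follows mechanically, so I do not anticipate any serious obstacle; the main care is simply to note that, for $h$ small enough, $\vec{\xi}_s^{\,(h)}$ stays away from the origin on $[0,\sqrt{h}]$ (using continuity and $\vec{\xi}_0 \neq 0$), so that $F^{(0)}(\vec{\xi}_s^{\,(h)})$ is well defined there.
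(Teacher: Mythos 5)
Your proposal is correct and is exactly the argument the paper leaves implicit when it states this result ``as a corollary'': the triangle inequality through $F^{(h)}$ using $\vec{\xi}^{\,(h)\prime}=\mathcal{F}^{(h)}(\vec{\xi}^{\,(h)})$, the second lemma applied with $T=\sqrt{h}$, and the cut of the integral at $\sqrt{h}$ with the uniform bound on the integrand (constants absorbed, as in the paper's own statement). No gaps worth noting.
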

We now compare $\vec{\xi}^{\,(0)}$ and $\vec{\xi}^{\,(h)}$.
\begin{prop}
In dimension $d\geq3$, let $T\in(0,T_{max})$ then $(\vec{\xi}_{t}^{\,(h)})_{t\in[0,T]}$
converges uniformly to $(\vec{\xi}_{t}^{\,(0)})_{t\in[0,T]}$ as $h\to0$.\end{prop}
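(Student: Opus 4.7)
The plan is to decompose the difference as an integral, control the error via the consistency estimate $\Gamma_t^{(h)} \leq \delta_1(h)$ just established, and close the estimate by a Gr\"onwall argument using the local Lipschitz property of $F^{(0)}$ extracted from the proof of existence for the limit equation. Writing
\begin{align*}
\vec{\xi}_t^{\,(h)} - \vec{\xi}_t^{\,(0)}
&= \int_0^t \bigl(\vec{\xi}_s^{\,(h)\prime} - F^{(0)}(\vec{\xi}_s^{\,(h)})\bigr)\,\dif s
+ \int_0^t \bigl(F^{(0)}(\vec{\xi}_s^{\,(h)}) - F^{(0)}(\vec{\xi}_s^{\,(0)})\bigr)\,\dif s\,,
\end{align*}
the first term is bounded in norm by $\Gamma_t^{(h)} \leq \delta_1(h) \to 0$ uniformly in $t \in [0, T]$, so the whole difficulty is in bounding the second term by the difference $|\vec{\xi}_s^{\,(h)} - \vec{\xi}_s^{\,(0)}|$ itself.

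Since $\vec{\xi}^{\,(0)}$ is a $\mathcal{C}^1$ maximal solution defined on $[0, T_{max})$ with $T < T_{max}$ and with values in $\mathbb{R}^d \setminus \{0\}$, the image $\vec{\xi}^{\,(0)}([0,T])$ is a compact subset of $\mathbb{R}^d \setminus \{0\}$, so it is contained in some annulus $C(r', R') = \{\vec{x} \in \mathbb{R}^d : r' \leq |\vec{x}| \leq R'\}$ with $0 < r' < R'$. I enlarge to $0 < r < r' < R' < R$ and let $L$ be a Lipschitz constant of $F^{(0)}$ on $C(r, R)$, whose existence was shown in the proof of the existence proposition for the limit equation.

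The main obstacle, addressed by a bootstrap, is to ensure that $\vec{\xi}_s^{\,(h)}$ remains in $\overline{C(r, R)}$ for $s \in [0, T]$, so that the Lipschitz bound actually applies along the trajectory. Set
$$T^*(h) = \sup\{t \in [0, T] : \vec{\xi}_s^{\,(h)} \in \overline{C(r, R)} \text{ for all } s \in [0, t]\}\,.$$
Since $\vec{\xi}_0^{\,(h)} = \vec{\xi}_0 = \vec{\xi}_0^{\,(0)} \in C(r', R')$ lies strictly inside $C(r, R)$, the continuity of $\vec{\xi}^{\,(h)}$ gives $T^*(h) > 0$. On $[0, T^*(h)]$ both functions take values in $C(r, R)$, hence
$$|\vec{\xi}_t^{\,(h)} - \vec{\xi}_t^{\,(0)}| \leq \delta_1(h) + L \int_0^t |\vec{\xi}_s^{\,(h)} - \vec{\xi}_s^{\,(0)}|\,\dif s\,,$$
and Gr\"onwall yields $|\vec{\xi}_t^{\,(h)} - \vec{\xi}_t^{\,(0)}| \leq \delta_1(h)\, e^{LT}$ on $[0, T^*(h)]$. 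For $h$ sufficiently small so that $\delta_1(h)\, e^{LT} < \min(r' - r, R - R')$, the trajectory $\vec{\xi}_s^{\,(h)}$ stays strictly inside $C(r, R)$ on $[0, T^*(h)]$, and continuity forbids $T^*(h) < T$. Thus $T^*(h) = T$, and the Gr\"onwall bound $\delta_1(h)\, e^{LT} \to 0$ as $h \to 0$ delivers the claimed uniform convergence on $[0, T]$.
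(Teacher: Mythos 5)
Your proof is correct and follows essentially the same route as the paper: the same decomposition of the difference, the consistency bound $\Gamma_t^{(h)}\leq\delta_1(h)$, the local Lipschitz property of $F^{(0)}$ on an annulus around the limit trajectory, a Gr\"onwall estimate, and a continuity/bootstrap argument to keep $\vec{\xi}^{\,(h)}$ in that annulus. The only differences are cosmetic (you confine the trajectory by an annulus margin via $T^*(h)$, the paper by a tubular neighbourhood and an exit-time contradiction).
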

\begin{proof}
Let $r,\, R>0$ and $\varepsilon>0$ be such that $r<|\vec{\xi}_{t}^{\,(0)}|-\varepsilon$
and $|\vec{\xi}_{t}^{\,(0)}|+\varepsilon<R$ for all $t\in[0,T]$.
Thus the open tubular neighbourhood \[
A_{\varepsilon}=\bigcup_{t\in[0,T]}B(\vec{\xi}_{t},\varepsilon)\]
of the trajectory $(\vec{\xi}_{t}^{\,(0)})_{t\in[0,T]}$ is included
in the compact ring \[
C(r,R)=\{\vec{x}\in\mathbb{R}^{d},r\leq|\vec{x}|\leq R\}\,.\]
On this ring the application $F^{(0)}$ is Lipschitz with the Lipschitz
constant $L$. We show that for $h$ small enough $\vec{\xi}_{t}^{\,(h)}$
remains in $A_{\varepsilon}$. We first observe that by continuity,
for small $t$, $\vec{\xi}_{t}^{\,(h)}$ remains in $A_{\varepsilon}$.
And that if, on an intervall $[0,t_{0})$, $\vec{\xi}_{t}^{\,(h)}$
remains in $C(r,R)$ then we can compute a Gronwall type estimate.
Let $G(t)=\int_{0}^{t}\big|\vec{\xi}_{s}^{\,(0)}-\vec{\xi}_{s}^{\,(h)}\big|\dif s$,
then \begin{align*}
G'\left(t\right) & =\big|\vec{\xi}_{t}^{\,(0)}-\vec{\xi}_{t}^{\,(h)}\big|\\
 & =\big|\int_{0}^{t}\big(F^{(0)}(\vec{\xi}_{s}^{\,(0)})-F^{(0)}(\vec{\xi}_{s}^{\,(h)})+F^{(0)}(\vec{\xi}_{s}^{\,(h)})-\vec{\xi}_{s}^{\,(h)\prime}\big)\dif s\big|\\
 & \leq LG(t)+\Gamma_{t}^{(h)}\,.\end{align*}
As $G(0)=0$ we get \[
G(t)\leq\int_{0}^{t}e^{L(t-s)}\Gamma_{s}^{(h)}\dif s\leq\int_{0}^{t}e^{Lt}\delta_{1}(h)\dif s\leq te^{Lt}\delta_{1}(h)\]
and \[
\big|\vec{\xi}_{t}^{\,(0)}-\vec{\xi}_{t}^{\,(h)}\big|=G'(t)\leq LG(t)+\Gamma_{t}^{(h)}\leq(Lte^{Lt}+1)\delta_{1}(h)\,.\]
for $t\in[0,t_{0})$. Let $h_{0}$ such that, for $h$ smaller than
$h_{0}$, $(LTe^{LT}+1)\delta_{1}(h)<\frac{\varepsilon}{2}$. Suppose
there exists a time $t$ in $[0,T]$ such that $\vec{\xi}_{t}^{\,(h)}\notin A_{\varepsilon}$
we define $t_{0}$ the inferior bound of such times. As $A_{\varepsilon}$
is open and $\vec{\xi}_{t}^{\,(0)}$ and $\vec{\xi}_{t}^{\,(h)}$
are continous then $\vec{\xi}_{t_{0}}^{\,(h)}\notin A_{\varepsilon}$.
But as $|\vec{\xi}_{t}^{\,(0)}-\vec{\xi}_{t}^{\,(h)}|\leq(LTe^{LT}+1)\delta_{1}(h)\leq\frac{\varepsilon}{2}$
on $[0,t_{0})$, by continuity $|\vec{\xi}_{t_{0}}^{\,(0)}-\vec{\xi}_{t_{0}}^{\,(h)}|\leq\frac{\varepsilon}{2}$
and so $\vec{\xi}_{t_{0}}^{\,(h)}\in A_{\varepsilon}$ we thus get
a contradiction and, for any $t\in[0,T]$ and $h\leq h_{0}$, $\vec{\xi}_{t}^{\,(h)}$
remains in $A_{\varepsilon}$. And thus we get the uniform convergence
of $(\vec{\xi}_{t}^{\,(h)})_{t\in[0,T]}$ to $(\vec{\xi}_{t})_{t\in[0,T]}$
as $h\to0$.
\end{proof}

\section*{Acknowledgement}

The author would like to thank Francis Nier for proposing the problem
which led to the study of those equations.

\bibliographystyle{plain}
\bibliography{biblio_xi}

\lyxaddress{}

\end{document}